\newtheorem{theorem}{Theorem}[section]
\newtheorem{lemma}[theorem]{Lemma}
\newtheorem*{rems}{Remarks}
\theoremstyle{definition}
\theoremstyle{remark}
\numberwithin{equation}{section} \oddsidemargin .5in \evensidemargin
\def\one{{\mathbf{1}}}
\begin{document}
\title{Heat-flow monotonicity of Strichartz norms}
\author{Jonathan Bennett}
\author{Neal Bez}
\author{Anthony Carbery}
\author{Dirk Hundertmark}
\thanks{The first and second authors were supported by EPSRC grant EP/E022340/1}
\address{Jonathan Bennett and Neal Bez \\ School of Mathematics \\
The University of Birmingham \\
The Watson Building \\
Edgbaston \\
Birmingham \\
B15 2TT \\
United Kingdom}\email{J.Bennett@bham.ac.uk\\ N.Bez@bham.ac.uk}
\address{Anthony Carbery \\ School of Mathematics and Maxwell Institute for Mathematical
Sciences \\ The University of Edinburgh \\ James Clerk Maxwell
Building \\ King's Buildings \\ Edinburgh \\ EH3 9JZ \\ United
Kingdom}\email{A.Carbery@ed.ac.uk}
\address{Dirk Hundertmark \\ Department of Mathematics\\
University of Illinois at Urbana--Champaign\\
Urbana\\ Illinois 61801\\ USA}\email{dirk@math.uiuc.edu}
\subjclass[2000]{35Q40, 35K05} \keywords{Heat-flow, Strichartz
estimates, Schr\"odinger equation}
\date{26th of September 2008}
%\dedicatory{}
%\commby{}
\begin{abstract} Most notably we prove that for $d=1,2$ the classical Strichartz norm
$$\|e^{i s\Delta}f\|_{L^{2+4/d}_{s,x}(\mathbb{R}\times\mathbb{R}^d)}$$
associated to the free Schr\"{o}dinger equation is nondecreasing as
the initial datum $f$ evolves under a certain quadratic heat-flow.
\end{abstract}

\maketitle

\section{Introduction}
For $d\in\mathbb{N}$ let the Fourier transform
$\widehat{f}:\mathbb{R}^d\rightarrow\mathbb{C}$ of a Lebesgue
integrable function $f$ on $\mathbb{R}^d$ be given by
\begin{equation*}
\widehat{f}(\xi) = \frac{1}{(2\pi)^{d/2}}\int_{\mathbb{R}^d}e^{-i
x\cdot\xi}f(x)\,dx.
\end{equation*}
For each $s\in\mathbb{R}$ the Fourier multiplier operator $e^{i
s\Delta}$ is defined via the Fourier transform by
\begin{equation*}
\widehat{e^{i s\Delta}f}(\xi) = e^{-i s|\xi|^2}\widehat{f}(\xi),
\end{equation*}
for all $f$ belonging to the Schwartz class
$\mathcal{S}(\mathbb{R}^d)$ and $\xi\in\mathbb{R}^d$. Thus for each
$f \in \mathcal{S}(\mathbb{R}^d)$ and $x\in\mathbb{R}^d$,
\begin{equation*}
e^{i s\Delta}f(x) = \frac{1}{(2\pi)^{d/2}} \int_{\mathbb{R}^d}
e^{i(x \cdot \xi - s|\xi|^2)}\widehat{f}(\xi)\,d\xi.
\end{equation*}
By an application of the Fourier transform in $x$ it is easily seen
that $e^{i s\Delta}f(x)$ solves the Schr\"odinger equation
\begin{equation} \label{e:Schrodinger}
i\partial_s u = -\Delta u,
\end{equation}
with initial datum $u(0,x)=f(x)$. It is well known that the above
solution operator $e^{i s\Delta}$ extends to a bounded operator from
$L^2(\mathbb{R}^d)$ to $L^{p}_sL^q_x(\mathbb{R} \times
\mathbb{R}^d)$ if and only if $(d,p,q)$ is Schr\"odinger admissible;
i.e. there exists a finite constant $C_{p,q}$ such that
\begin{equation} \label{e:Strichartz}
  \|e^{i s\Delta}f\|_{L^{p}_sL^q_x(\mathbb{R} \times \mathbb{R}^d)} \leq
  C_{p,q}\|f\|_{L^2(\mathbb{R}^d)}
\end{equation}
if and only if
\begin{equation}\label{scal}
\text{$p,q \geq 2$, \,\, $(d,p,q) \neq (2,2,\infty)$ \,\, and \,\,
$\frac{2}{p}+\frac{d}{q}=\frac{d}{2}$.}
\end{equation}
For $p=q=2+4/d$, this classical inequality is due to Strichartz
\cite{Strichartz} who followed arguments of Stein and Tomas
\cite{Tomas}. For $p\not=q$ the reader is referred to \cite{KT} for
historical references and a full treatment of \eqref{e:Strichartz}
for suboptimal constants $C_{p,q}$.

Recently Foschi \cite{Foschi} and independently Hundertmark and
Zharnitsky \cite{HZ} showed that in the cases where one can
``multiply out" the Strichartz norm \begin{equation}\label{strinorm}
\|e^{i s\Delta}f\|_{L^{p}_sL^q_x(\mathbb{R} \times \mathbb{R}^d)},
\end{equation}
that is, when $q$ is an even integer which divides $p$, the sharp
constants $C_{p,q}$ in the above inequalities are obtained by
testing on isotropic centred gaussians. (These authors considered
$p=q$ only.) The main purpose of this paper is to highlight a
startling monotonicity property of such Strichartz norms as the
function $f$ evolves under a certain quadratic heat-flow.

\begin{theorem}\label{t:main} Let $f  \in L^2(\mathbb{R}^d)$. If $(d,p,q)$ is
Schr\"odinger admissible and $q$ is an even integer which divides
$p$ then the quantity
\begin{equation} \label{e:Qpq}
  Q_{p,q}(t) := \|e^{i s\Delta}(e^{t\Delta}|f|^2)^{1/2}
  \|_{L^p_sL^q_x(\mathbb{R} \times \mathbb{R}^d)}
\end{equation}
is nondecreasing for all $t>0$; i.e. $Q_{p,q}$ is nondecreasing in
the cases $(1,6,6)$, $(1,8,4)$ and $(2,4,4)$.
\end{theorem}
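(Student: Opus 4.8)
The plan is to reduce the stated monotonicity to the heat--flow monotonicity of Brascamp--Lieb functionals. We may assume $f\not\equiv 0$, so that $u(t,\cdot):=e^{t\Delta}|f|^{2}$ is smooth and strictly positive for every $t>0$; hence $g_{t}:=u(t,\cdot)^{1/2}$ is smooth and positive, $\|g_{t}\|_{L^{2}}=\|f\|_{L^{2}}$, and by \eqref{e:Strichartz} we have $Q_{p,q}(t)<\infty$, so it suffices to prove that $Q_{p,q}(t)^{p}$ is nondecreasing. \emph{Step 1 (multiplying out).} Write $q=2m$ and $n=p/q$, so $p=2mn$; a short computation shows Schr\"odinger admissibility \eqref{scal} is equivalent to $dn(m-1)=2$, which is exactly why only $(1,6,6)$, $(1,8,4)$, $(2,4,4)$ occur. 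Following Foschi and Hundertmark--Zharnitsky, expand $|e^{is\Delta}g_{t}|^{q}$ as a product of $m$ copies of $e^{is\Delta}g_{t}$ and $m$ conjugates, raise to the power $n$, insert the Schr\"odinger kernel $k_{s}(z)=(4\pi is)^{-d/2}e^{i|z|^{2}/4s}$, and carry out the $n$ spatial integrations; these produce Dirac masses expressing momentum conservation within each of the $n$ groups, and the remaining time integral $\int_{0}^{\infty}s^{-dn(m-1)}e^{iE/4s}\,ds=\int_{0}^{\infty}e^{iE\sigma/4}\,d\sigma$ --- with $E$ the signed sum of the $|y_{i}|^{2}$, and where the exponent of $s$ vanishes precisely because $dn(m-1)=2$ --- equals a positive multiple of $\delta(E)$ plus an imaginary principal value in $E$; the latter is antisymmetric under interchanging the ``$+$'' and ``$-$'' variables while the surviving integrand $\prod_{i}g_{t}(\cdot)$ is symmetric, so it contributes nothing. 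One is left with
\[
Q_{p,q}(t)^{p}=c_{d,p,q}\int_{(\mathbb{R}^{d})^{p}}\ \prod_{i=1}^{p}g_{t}(y_{i})\ d\mu(y),\qquad c_{d,p,q}>0,
\]
where $\mu$ is Lebesgue measure on the locus where, in each group, the signed sum of the $y_{i}$ vanishes and, globally, the signed sum of the $|y_{i}|^{2}$ vanishes.

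\emph{Step 2 (resolving $\mu$ with constant density).} Use the momentum constraints to eliminate the ``$-$'' variables in favour of the $n$ group centres of mass $\sigma_{1},\dots,\sigma_{n}\in\mathbb{R}^{d}$ and the ``$+$''/``$-$'' deviations $\vec a,\vec b$ from them, both lying in a Euclidean space of dimension $n(m-1)d$, which equals $2$. By orthogonality of the centre-of-mass and deviation components, the energy constraint collapses to $|\vec a|=|\vec b|$; writing $\vec b=R\vec a$ with $R\in SO(2)$ and using that $SO(2)$ acts simply transitively on circles in $\mathbb{R}^{2}$, one checks that every change of variables involved has constant Jacobian, so that
\[
Q_{p,q}(t)^{p}=c\int_{SO(2)}\Big(\int_{\mathbb{R}^{N}}\prod_{i=1}^{p}g_{t}\big(L_{i}^{R}x\big)\,dx\Big)\,dR,\qquad N=\tfrac{pd}{2},
\]
for surjective linear maps $L_{i}^{R}\colon\mathbb{R}^{N}\to\mathbb{R}^{d}$ depending smoothly on $R$. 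I expect this to be the main obstacle: the exact identification of $\mu$ and, above all, the verification that its resolution yields a Haar average of \emph{flat} Lebesgue measures on subspaces with \emph{no} leftover density. This last point depends decisively on $n(m-1)d=2$, via the simple transitivity of $SO(2)$ on $S^{1}$; for $n(m-1)d\ge 3$ the analogous factorisation fails, which is why the theorem is confined to these three triples.

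\emph{Step 3 (heat--flow monotonicity and conclusion).} For fixed $R$ the inner integral is $\int_{\mathbb{R}^{N}}\prod_{i=1}^{p}\big(e^{t\Delta}|f|^{2}\big)^{1/2}\!\big(L_{i}^{R}x\big)\,dx$, the Brascamp--Lieb functional of the datum $\big((L_{i}^{R})_{i=1}^{p},(\tfrac12)_{i=1}^{p}\big)$ run along the heat flow with initial datum $|f|^{2}$. The scaling identity $\sum_{i}\tfrac12\operatorname{rank}L_{i}^{R}=\tfrac{pd}{2}=N$ holds, and the Brascamp--Lieb constant of this datum is finite --- this is the Strichartz inequality \eqref{e:Strichartz} transcribed through the identities above, and can also be checked directly from the subspace/dimension conditions on the explicit maps $L_{i}^{R}$. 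By the heat--flow monotonicity of Brascamp--Lieb functionals (Bennett--Carbery--Christ--Tao), for any such finite datum and any nonnegative $F$ the map $t\mapsto\int_{\mathbb{R}^{N}}\prod_{i}(e^{t\Delta}F)^{1/2}\circ L_{i}^{R}$ is nondecreasing; applied with $F=|f|^{2}$ this makes the inner integral nondecreasing in $t$ for every $R$, and integrating against the nonnegative Haar measure on $SO(2)$ yields that $Q_{p,q}(t)^{p}$, hence $Q_{p,q}(t)$, is nondecreasing. (Equivalently one can run the heat-flow computation by hand: differentiating $\int_{\mathbb{R}^{N}}\prod_{i}g_{t}(L_{i}^{R}x)\,dx$ and integrating by parts in $x$ --- using that each factor $g_{t}$ solves $\partial_{t}g=\Delta g+|\nabla g|^{2}/g$ --- reduces $\tfrac{d}{dt}$ to a manifestly nonnegative quadratic expression, which is exactly the Bennett--Carbery--Christ--Tao computation with the heat flows taking place in the $d$-dimensional target spaces; the only analytic point is the decay/integrability needed to discard the boundary term, and that is supplied by the finiteness just noted.)
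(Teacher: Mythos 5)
Your proposal is correct in substance and follows the same strategy as the paper: multiply out the norm, recognise the resulting delta-measure as an average over the rotation group of the two-dimensional space $W^{\perp}$ orthogonal to the ``centre-of-mass'' directions, and then apply a heat-flow monotonicity statement under that nonnegative average. The differences are in packaging. Your Steps 1--2, which you flag as the main obstacle, are precisely the Hundertmark--Zharnitsky representation, which the paper simply quotes as Lemma \ref{l:HZ} together with the group-average form \eqref{e:Prep} of the projection; your reason why no density is left over (in $W^{\perp}\cong\mathbb{R}^{2}$ one has $\delta(|a|^{2}-|b|^{2})\,db=\tfrac{1}{2}\,d\phi$, with no radial factor, unlike dimension $\geq 3$) is exactly the two-dimensional feature underlying that lemma, so this step is sound but is cited rather than reproved in the paper. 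In Step 3 the paper is more elementary than an appeal to \cite{BCCT}: it tensorises, writing the integrand for fixed $\rho$ as $(e^{t\Delta}|F|^{2})^{1/2}(e^{t\Delta}|F_{\rho}|^{2})^{1/2}$ with the heat flow acting on $\mathbb{R}^{md}$, and applies the two-function Cauchy--Schwarz monotonicity of Lemma \ref{l:CS}, proved in a few lines from the semigroup property. Your $p$-linear formulation is equivalent, but one caution: the heat-flow monotonicity of \cite{BCCT} under the \emph{simultaneous standard} heat flow is a statement about geometric data, not about arbitrary data with finite Brascamp--Lieb constant, so you must check $L_{i}^{R}(L_{i}^{R})^{*}=I_{d}$ and $\sum_{i}\tfrac{1}{2}(L_{i}^{R})^{*}L_{i}^{R}=I_{N}$ --- which does hold here, the $L_{i}^{R}$ being coordinate projections and their compositions with an isometry --- rather than appeal to finiteness alone. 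Finally, you treat $(1,8,4)$ directly with $n=2$ groups, whereas the paper reduces it to $(2,4,4)$ by tensoring $|f|^{2}\otimes|f|^{2}$; both work, yours being more uniform across the three triples and the paper's being shorter.
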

The heat operator $e^{t\Delta}$ is of course defined to be the
Fourier multiplier operator with multiplier $e^{-t|\xi|^2}$, and so
$$e^{t\Delta}|f|^2=H_t*|f|^2,$$
where the heat kernel $H_t:\mathbb{R}^d\rightarrow\mathbb{R}$ is
given by
\begin{equation} \label{e:heatkernel}
H_t(x) = \frac{1}{(4\pi t)^{d/2}} \, e^{-|x|^2/4t}.
\end{equation}

By making an appropriate rescaling one may rephrase the above result
in terms of ``sliding gaussians" in the following way. For $f\in
L^2(\mathbb{R}^d)$ let $u : (0,\infty) \times \mathbb{R}^d
\rightarrow \mathbb{R}$ be given by $u(t,x) = H_t*|f|^2(x)$ and
$\widetilde{u} : (0,\infty) \times \mathbb{R}^d \rightarrow
\mathbb{R}$ be given by
\begin{equation*}
\widetilde{u}(t,x) = t^{-d}u(t^{-2},t^{-1}x) =
\frac{1}{(4\pi)^{d/2}}
\int_{\mathbb{R}^d}e^{-\frac{1}{4}|x-tv|^2}|f(v)|^2\,dv.
\end{equation*}
We interpret $\widetilde{u}$ as a superposition of translates of a
fixed gaussian which simultaneously slide to the origin as $t$ tends
to zero. By a simple change of variables it follows that
\begin{equation} \label{e:rescaledQ}
Q_{p,q}(t^{-2})=\|e^{i
s\Delta}(\widetilde{u}(t,\cdot)^{1/2})\|_{L^p_sL^q_x(\mathbb{R}
\times \mathbb{R}^d)}.
\end{equation}
The reader familiar with the standard wave-packet analysis in the
context of Fourier extension estimates may find it more enlightening
to interpret Theorem \ref{t:main} via this rescaling.

The claimed monotonicity of $Q_{p,q}$ yields the sharp constant
$C_{p,q}$ in \eqref{e:Strichartz} as a simple corollary. To see
this, suppose that the function $f$ is bounded and has compact
support. Then, by rudimentary calculations,
\begin{equation*}
\lim_{t \rightarrow 0} Q_{p,q}(t) =
\|\,e^{is\Delta}|f|\,\|_{L^{p}_sL^q_x(\mathbb{R} \times
\mathbb{R}^d)}
\end{equation*}
which, by virtue of the fact that $q$ is an even integer which
divides $p$, is greater than or equal to $\|e^{i
s\Delta}f\|_{L^{p}_s(L^q_x(\mathbb{R}^d))}$. Furthermore, because of
\eqref{e:rescaledQ} it follows that
\begin{equation*}
\lim_{t \rightarrow \infty} Q_{p,q}(t) = \|e^{i
s\Delta}(H_1^{1/2})\|_{L^{p}_sL^q_x(\mathbb{R} \times
\mathbb{R}^d)}\|f\|_{L^2(\mathbb{R}^d)},
\end{equation*}
where $H_1$ is the heat kernel at time $t=1$. Therefore Theorem
\ref{t:main} gives the sharp constant $C_{p,q}$ in
\eqref{e:Strichartz} for the triples $(1,6,6)$, $(1,8,4)$ and
$(2,4,4)$, and shows that gaussians are maximisers. In particular,
if
\begin{equation*}
    C_{p,q} := \sup\{ \|e^{i s\Delta}f\|_{L^{p}_sL^q_x(\mathbb{R} \times \mathbb{R}^d)}
    : \|f\|_{L^2(\mathbb{R}^d)} = 1\}
\end{equation*}
then $C_{6,6} = 12^{-1/12}$, $C_{8,4} = 2^{-1/4}$ and $C_{4,4} =
2^{-1/2}$. As we have already noted, $C_{6,6}$ and $C_{4,4}$ were
found recently by Foschi \cite{Foschi} and independently Hundertmark
and Zharnitsky \cite{HZ}. In the $(1,8,4)$ case, we shall see in the
proof of Theorem \ref{t:main} below that the monotonicity (and hence
sharp constant) follows in a cheap way from the $(2,4,4)$ case.

Heat-flow methods have already proved effective in treating certain
$d$-linear analogues of the Strichartz estimate
\eqref{e:Strichartz}; see Bennett, Carbery and Tao \cite{BCT}. Also
intimately related (as we shall see) are the works of Carlen, Lieb,
and Loss \cite{CLL} and Bennett, Carbery, Christ and Tao \cite{BCCT}
in the setting of the multilinear Brascamp--Lieb inequalities.

We prove Theorem \ref{t:main} in Section 2, and discuss some further
results in Section 3.

\section{Proof of Theorem \ref{t:main}}

The idea behind the proof of Theorem \ref{t:main} is simply to
express the Strichartz norm
\begin{equation*}
\|e^{is\Delta}f\|_{L^{p}_sL^q_x(\mathbb{R} \times \mathbb{R}^d)}
\end{equation*}
in terms of quantities which are already known to be monotone under
the heat-flow that we consider. As we shall see, this essentially
amounts to bringing together the Strichartz-norm representation
formulae of Hundertmark and Zharnitsky \cite{HZ} and the following
heat-flow monotonicity property inherent in the Cauchy--Schwarz
inequality.

\begin{lemma} \label{l:CS} For $n \in \mathbb{N}$ and nonnegative integrable functions
$f_1$ and $f_2$ on $\mathbb{R}^n$ the quantity
\begin{equation*}
    \Lambda(t) := \int_{\mathbb{R}^n} (e^{t\Delta}f_1)^{1/2}(e^{t\Delta}f_2)^{1/2}
\end{equation*}
is nondecreasing for all $t>0$.
\end{lemma}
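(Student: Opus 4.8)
The plan is to differentiate $\Lambda(t)$ in $t$ and show the derivative is nonnegative by exhibiting the integrand as a perfect square (or a sum of squares) after an integration by parts. Write $u_j(t,x) = e^{t\Delta}f_j(x)$, so that each $u_j$ solves the heat equation $\partial_t u_j = \Delta u_j$ with $u_j \geq 0$. Formally,
\begin{equation*}
\Lambda'(t) = \int_{\mathbb{R}^n} \partial_t\bigl( u_1^{1/2} u_2^{1/2} \bigr) = \int_{\mathbb{R}^n} \frac{1}{2}\Bigl( \frac{\Delta u_1}{u_1^{1/2}} u_2^{1/2} + u_1^{1/2} \frac{\Delta u_2}{u_2^{1/2}} \Bigr).
\end{equation*}
The idea is now to integrate by parts to move one derivative off each Laplacian. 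Setting $v_j = u_j^{1/2}$ (so $\nabla u_j = 2 v_j \nabla v_j$), a direct computation should rewrite the integrand, after integrating by parts, as a nonnegative quantity of the form $\tfrac12\int |v_2 \nabla v_1 - v_1 \nabla v_2|^2 / (v_1 v_2)$ or an equivalent expression; the cross terms cancel against $\nabla v_1 \cdot \nabla v_2$ contributions and what remains is manifestly $\geq 0$. In other words, the monotonicity is the infinitesimal form of the Cauchy--Schwarz inequality: $\Lambda(t)$ is, up to the heat semigroup, an inner product of $u_1^{1/2}$ and $u_2^{1/2}$, and the heat flow regularizes in a way that only increases the overlap.

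Concretely, I would carry out the following steps. First, reduce to the case $f_1, f_2$ smooth, strictly positive and rapidly decaying (e.g. by adding $\varepsilon$ times a gaussian and using that $e^{t\Delta}$ of an integrable nonnegative function is smooth and positive for $t>0$, then letting $\varepsilon \to 0$ and using monotone/dominated convergence together with the fact that for $t_0>0$ fixed one can run the flow from time $t_0$); this legitimizes differentiation under the integral sign and the integration by parts. Second, compute $\partial_t(v_1 v_2)$ using $\partial_t v_j = \tfrac12 v_j^{-1}\Delta u_j = \Delta v_j - |\nabla v_j|^2/v_j$ (the last identity from $\Delta u_j = \Delta v_j^2 = 2v_j \Delta v_j + 2|\nabla v_j|^2$). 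Third, substitute and integrate by parts the $\Delta v_j$ terms against the other factor. Fourth, collect terms over the common denominator $v_1 v_2$ and verify the numerator is a nonnegative quadratic form in $(\nabla v_1, \nabla v_2)$ — it should collapse to $|v_2\nabla v_1 - v_1 \nabla v_2|^2$ up to a positive constant. Finally, remove the regularization.

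The main obstacle I anticipate is not the algebraic identity — that is a short calculation — but the analytic justification: ensuring $\Lambda(t)$ is finite and differentiable, and that the integration by parts produces no boundary terms, under only the hypothesis that $f_1, f_2$ are nonnegative and integrable. The quantities $v_j^{-1}\nabla v_j$ can be delicate where $v_j$ is small, so the positivity-and-smoothness of $e^{t\Delta}f_j$ for $t>0$ is essential, as is controlling the Gaussian-type decay of $u_j$ and its derivatives to kill boundary terms at infinity. Once one works with $u_j(t,\cdot)$ for $t$ bounded away from $0$ and exploits that the heat semigroup maps $L^1$ into the Schwartz-decaying, smooth, strictly positive functions, these issues are manageable; an approximation argument then recovers the statement for all $t>0$ as claimed. (An alternative, perhaps cleaner, route is to observe that $\Lambda(t) = \langle e^{t\Delta}f_1, e^{t\Delta}f_2\rangle$-type monotonicity can be read off from the known monotonicity results underlying the Brascamp--Lieb heat-flow method of \cite{BCCT}, taking the two-function rank-one case; but the direct computation above is self-contained and I would present that.)
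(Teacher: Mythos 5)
Your proposal is correct in outline but takes a genuinely different route from the paper. You differentiate $\Lambda$ in $t$ and integrate by parts to exhibit $\Lambda'(t)$ as a nonnegative integral; this is precisely the ``divergence theorem'' method that the paper attributes to \cite{CLL} and \cite{BCCT} and records only as an aside, in the form of the explicit formula \eqref{e:csform}, valid for sufficiently well-behaved data. The paper's actual proof of Lemma \ref{l:CS} is derivative-free: for $0<t_1<t_2$ it writes $\Lambda(t_1)=\int H_{t_2-t_1}*\bigl((H_{t_1}*f_1)^{1/2}(H_{t_1}*f_2)^{1/2}\bigr)$ using the fact that convolution with the heat kernel preserves integrals, and then applies the Cauchy--Schwarz inequality pointwise inside the outer convolution together with the semigroup property $H_{t_2-t_1}*H_{t_1}*f_j=H_{t_2}*f_j$ to conclude $\Lambda(t_1)\le\Lambda(t_2)$ directly (this is Ball's argument \cite{Ball}, further developed in \cite{BCCT}). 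What each approach buys: yours yields the quantitative derivative formula (indeed the expression you aim for agrees with \eqref{e:csform}, since $\nabla\log u_j=2\nabla v_j/v_j$), but at the cost of exactly the analytic work you flag --- differentiability of $\Lambda$, absence of boundary terms, strict positivity --- which is why you need the regularization and limiting argument, and why the paper states \eqref{e:csform} only for, say, bounded compactly supported $f_j$; the paper's argument needs no regularity at all and proves the lemma for arbitrary nonnegative integrable $f_1,f_2$ in five lines. Two specific points to repair in your sketch: the identity for the square root should read $\partial_t v_j=\Delta v_j+|\nabla v_j|^2/v_j$ (not minus; with your sign the cross terms after integrating by parts produce $-|v_2\nabla v_1+v_1\nabla v_2|^2/(v_1v_2)$ and the argument would go the wrong way), and the correct constant in the final formula is
\begin{equation*}
\Lambda'(t)=\int_{\mathbb{R}^n}\frac{|v_2\nabla v_1-v_1\nabla v_2|^2}{v_1v_2}
=\frac14\int_{\mathbb{R}^n}|\nabla\log u_1-\nabla\log u_2|^2\,u_1^{1/2}u_2^{1/2},
\end{equation*}
consistent with \eqref{e:csform}; neither affects nonnegativity once corrected, but as written the displayed evolution equation for $v_j$ is false. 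Your limiting argument (truncate/regularize, use order-preservation of $e^{t\Delta}$ and dominated or monotone convergence, and note that pointwise limits of nondecreasing functions are nondecreasing) is viable, but you should also justify the decay at infinity of $v_2\nabla v_1$ and $v_1\nabla v_2$ needed to kill boundary terms --- strict positivity from the added gaussian does not by itself give this; for bounded compactly supported data it follows from gaussian bounds on $H_t*f_j$ and its gradient, which is exactly the setting in which the paper invokes \eqref{e:csform}.
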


\begin{proof} Let $0 < t_1 < t_2$. If $H_t$ denotes the heat kernel
on $\mathbb{R}^n$ given by \eqref{e:heatkernel} then,
\begin{align*}
  \Lambda(t_1) & = \int_{\mathbb{R}^n} (H_{t_1} * f_1)^{1/2} (H_{t_1} *
  f_2)^{1/2} \\
  & = \int_{\mathbb{R}^n} H_{t_2-t_1} * ((H_{t_1} * f_1)^{1/2} (H_{t_1} *
  f_2)^{1/2}) \\
  & = \int_{\mathbb{R}^n} \int_{\mathbb{R}^n} (H_{t_2-t_1}(x-y)H_{t_1}*f_1(y))^{1/2}
  (H_{t_2-t_1}(x-y)H_{t_1}*f_2(y))^{1/2}\,dydx \\
  & \leq \int_{\mathbb{R}^n} (H_{t_2-t_1} * (H_{t_1} * f_1))^{1/2} (H_{t_2-t_1} * (H_{t_1} *
  f_2))^{1/2} \\
  & = \Lambda(t_2),
\end{align*}
where we have used the Cauchy--Schwarz inequality on
$L^2(\mathbb{R}^n)$ and the semigroup property of the heat kernel.
\end{proof}

The above proof of Lemma \ref{l:CS} originates in work of Ball
\cite{Ball} and was developed further in \cite{BCCT}. An alternative
method of proof in \cite{CLL} and \cite{BCCT} which is based on the
divergence theorem produces the explicit formula
  \begin{equation}\label{e:csform}
    \Lambda'(t) = \frac{1}{4}\int_{\mathbb{R}^n} |\nabla(\log e^{t\Delta}f_1)
    -\nabla(\log e^{t\Delta}f_2)|^2
    (e^{t\Delta}f_1)^{1/2}(e^{t\Delta}f_2)^{1/2}
  \end{equation}
for each $t > 0$ provided $f_1$ and $f_2$ are sufficiently
well-behaved (such as bounded with compact support). We remark in
passing that the Cauchy--Schwarz inequality on $L^2(\mathbb{R}^n)$
follows from Lemma \ref{l:CS} by comparing the limiting values of
$\Lambda(t)$ for $t$ at zero and infinity.

The next lemma is an observation of Hundertmark and Zharnitsky
\cite{HZ} who showed that multiplied out expressions for the
Strichartz norm in the $(1,6,6)$ and $(2,4,4)$ cases have a
particularly simple geometric interpretation.

\begin{lemma} \label{l:HZ}
\emph{(1)} For nonnegative $f \in L^2(\mathbb{R})$,
\begin{equation*}
\|e^{is\Delta}f\|_{L^6_sL^6_x(\mathbb{R} \times \mathbb{R})}^6 =
\frac{1}{2\sqrt{3}}\int_{\mathbb{R}^3} (f \otimes f \otimes f)(X)
P_1(f \otimes f \otimes f)(X)\,dX
\end{equation*}
where $P_1:L^2(\mathbb{R}^3) \rightarrow L^2(\mathbb{R}^3)$ is the
projection operator onto the subspace of functions on $\mathbb{R}^3$
which are invariant under the isometries which fix the direction
$(1,1,1)$.

\emph{(2)} For nonnegative $f \in L^2(\mathbb{R}^2)$,
\begin{equation*}
\|e^{is\Delta}f\|_{L^4_sL^4_x(\mathbb{R} \times\mathbb{R}^2)}^4 =
\frac{1}{4} \int_{\mathbb{R}^4}(f \otimes f)(X) P_2(f \otimes
f)(X)\,dX
\end{equation*}
where $P_2:L^2(\mathbb{R}^4) \rightarrow L^2(\mathbb{R}^4)$ is the
projection operator onto the subspace of functions on $\mathbb{R}^4$
which are invariant under the isometries which fix the directions
$(1,0,1,0)$ and $(0,1,0,1)$.
\end{lemma}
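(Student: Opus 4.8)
The plan is to establish Lemma~\ref{l:HZ} by directly computing the multiplied-out Strichartz norms using the Fourier-space representation of $e^{is\Delta}f$, and then recognising the resulting integral expression as a quadratic form involving projection onto a subspace defined by a linear constraint. First I would write, for the $(1,6,6)$ case,
\begin{equation*}
\|e^{is\Delta}f\|_{L^6_{s,x}}^6 = \int_{\mathbb{R}}\int_{\mathbb{R}} |e^{is\Delta}f(x)|^6 \, dx \, ds = \int_{\mathbb{R}}\int_{\mathbb{R}} (e^{is\Delta}f)^3(x)\,\overline{(e^{is\Delta}f)^3(x)}\,dx\,ds,
\end{equation*}
assuming $f$ nonnegative so that there is no issue conflating $|e^{is\Delta}f|$ with $e^{is\Delta}f$ at the level of the cube; more precisely one expands $|e^{is\Delta}f|^6 = ((e^{is\Delta}f)(\overline{e^{is\Delta}f}))^3$ and uses that $\widehat{f}$ may be taken to satisfy $\widehat{f}(-\xi)=\overline{\widehat{f}(\xi)}$ after a harmless reduction. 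Substituting the Fourier integral for each factor of $e^{is\Delta}f$, the $x$-integration produces a delta function in $\xi_1+\xi_2+\xi_3-\xi_4-\xi_5-\xi_6$, and the $s$-integration produces a delta function in $|\xi_1|^2+|\xi_2|^2+|\xi_3|^2-|\xi_4|^2-|\xi_5|^2-|\xi_6|^2$. Thus the sextilinear expression is an integral of $\widehat{f}\otimes\widehat{f}\otimes\widehat{f}$ against its conjugate over the set where both the sum of frequencies and the sum of squared frequencies are preserved.

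The key geometric observation of Hundertmark and Zharnitsky is that this pair of constraints on $(\xi_1,\dots,\xi_6)\in\mathbb{R}^6 = \mathbb{R}^3\times\mathbb{R}^3$ — namely $\xi_1+\xi_2+\xi_3 = \eta_1+\eta_2+\eta_3$ and $|\xi_1|^2+|\xi_2|^2+|\xi_3|^2 = |\eta_1|^2+|\eta_2|^2+|\eta_3|^2$, writing $(\eta_1,\eta_2,\eta_3)$ for $(\xi_4,\xi_5,\xi_6)$ — says precisely that $\xi$ and $\eta$ lie on the same sphere, centred on the line $\mathbb{R}(1,1,1)$, within the affine hyperplane $\{\xi_1+\xi_2+\xi_3 = \text{const}\}$. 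Integrating a product $g(\xi)\overline{g(\eta)}$ against this measure is therefore, up to the Jacobian constant $\tfrac{1}{2\sqrt 3}$ coming from the change of variables that diagonalises the quadratic constraint, exactly the $L^2(\mathbb{R}^3)$ inner product of $g$ with its average over the group of isometries fixing the direction $(1,1,1)$, i.e. $\langle g, P_1 g\rangle$ with $g = f\otimes f\otimes f$ (here using that $\widehat{f\otimes f\otimes f} = \widehat f\otimes\widehat f\otimes\widehat f$ and that $P_1$, being a projection built from orthogonal transformations, commutes with the Fourier transform so we may pass back to physical space). The $(2,4,4)$ case is handled identically: expanding $\|e^{is\Delta}f\|_{L^4_{s,x}}^4$ over $\mathbb{R}^2\times\mathbb{R}^2$ yields delta functions enforcing $\xi_1+\xi_2 = \eta_1+\eta_2$ in $\mathbb{R}^2$ (two scalar constraints) and $|\xi_1|^2+|\xi_2|^2 = |\eta_1|^2+|\eta_2|^2$ (one scalar constraint), which together say that $\xi,\eta\in\mathbb{R}^4$ lie on a common sphere centred on the plane spanned by $(1,0,1,0)$ and $(0,1,0,1)$ inside the appropriate codimension-two affine subspace; the Jacobian constant works out to $\tfrac14$ and $P_2$ is the corresponding averaging projection.

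The main obstacle is bookkeeping rather than conceptual: one must carefully justify the interchange of the distributional $s$- and $x$-integrations with the frequency integrals (legitimate for Schwartz $f$ by Fubini after suitable truncation, then extended by density, exactly as in the classical derivation of the Stein--Tomas/Strichartz estimate), and one must correctly compute the constant relating the surface measure cut out by the two delta functions to the rotation-averaging projection $P_j$ — this amounts to computing the determinant of the linear map sending the "radial" and "longitudinal" variables to the constrained quantities, which gives $2\sqrt 3$ in dimension $d=1$ and $4$ in dimension $d=2$. One should also record the elementary point that $P_j$ is an orthogonal projection (the fixed-point subspace of a group of orthogonal transformations is closed, and averaging over a compact group is self-adjoint and idempotent) so that the bilinear expression is genuinely of the stated quadratic form, and note that the nonnegativity hypothesis on $f$ guarantees $f\otimes\cdots\otimes f \ge 0$, which is what will later make Lemma~\ref{l:CS} applicable when this representation is combined with the heat-flow monotonicity in the proof of Theorem~\ref{t:main}.
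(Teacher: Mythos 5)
The paper itself gives no proof of Lemma \ref{l:HZ}: it is quoted as an observation of Hundertmark and Zharnitsky, with the reader referred to \cite{HZ}. Your sketch is essentially the original derivation from that reference, so there is no real divergence of method to report: you expand the even power, apply Plancherel/Fubini in the space--time variables to produce the two delta constraints, observe that for fixed $\xi$ the constrained set of $\eta$'s is the sphere through $\xi$ centred on the invariant subspace (the span of $(1,1,1)$, respectively of $(1,0,1,0)$ and $(0,1,0,1)$), so that integration against the induced measure is a constant multiple of the group average $P_j$, and finally pass from $\widehat{f}\otimes\cdots\otimes\widehat{f}$ back to $f\otimes\cdots\otimes f$ using Parseval and the fact that $P_j$, being an average over orthogonal maps, commutes with the Fourier transform. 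The constants you assert do check out: the delta measure equals $\tfrac{\pi}{\sqrt{3}}$ (resp.\ $\tfrac{\pi}{2}$) times the probability average over the relevant circle, and the Fourier/Plancherel normalisations contribute $\tfrac{1}{2\pi}$, giving $\tfrac{1}{2\sqrt{3}}$ and $\tfrac{1}{4}$. Two minor points: the ``reduction'' to $\widehat{f}(-\xi)=\overline{\widehat{f}(\xi)}$ is not needed anywhere, since $|z|^6=z^3\overline{z}^3$ for arbitrary complex $z$ and the conjugates are carried through Parseval intact; realness (nonnegativity) of $f$ is used only at the very end to replace $\int F\,\overline{P_jF}$ by $\int F\,P_jF$, while nonnegativity proper matters later, when the representation is fed into Lemma \ref{l:CS}.
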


\begin{proof}[Proof of Theorem \ref{t:main}]
We begin with the case where $(p,q,d)$ is equal to $(1,6,6)$. For
functions $G \in L^2(\mathbb{R}^3)$ we may write
\begin{equation} \label{e:Prep}
  P_1G(X) = \int_{O} G(\rho X)\,d\mathcal{H}(\rho)
\end{equation}
where $O$ is the group of isometries on $\mathbb{R}^3$ which
coincide with the identity on the span of $(1,1,1)$ and
$d\mathcal{H}$ denotes the right-invariant Haar probability measure
on $O$.

If, for $f \in L^2(\mathbb{R})$, we let $F := f \otimes f \otimes f$
then it is easy to see that
\begin{equation} \label{e:tensor}
e^{t\Delta}|f|^2 \otimes e^{t\Delta}|f|^2 \otimes e^{t\Delta}|f|^2 =
e^{t\Delta}|F|^2
\end{equation}
because, in general, the heat operator $e^{t \Delta}$ commutes with
tensor products. It is also easy to check that for each isometry
$\rho$ on $\mathbb{R}^3$,
\begin{equation} \label{e:isom}
(e^{t\Delta}|f|^2 \otimes e^{t\Delta}|f|^2 \otimes e^{t\Delta}|f|^2)
(\rho \, \cdot) = e^{t\Delta}|F_\rho|^2
\end{equation}
where $F_\rho := F(\rho \, \cdot)$. In \eqref{e:tensor} and
\eqref{e:isom} the Laplacian $\Delta$ acts in the number of
variables dictated by context. Therefore, by Lemma \ref{l:HZ}(1),
\begin{equation*}
  Q_{6,6}(t)^6 = \frac{1}{2\sqrt{3}}  \int_O \int_{\mathbb{R}^3} (e^{t\Delta}|F|^2)
  ^{1/2}(X) (e^{t\Delta}|F_\rho|^2)^{1/2}(X) \,dXd\mathcal{H}(\rho)
\end{equation*}
and, by Lemma \ref{l:CS} and the nonnegativity of the measure
$d\mathcal{H}$, it follows that $Q_{6,6}(t)$ is nondecreasing for
each $t
> 0$.

For the $(2,4,4)$ case, we use a representation of the form
\eqref{e:Prep} for the projection operator $P_2$ where the averaging
group $O$ is replaced by the group of isometries on $\mathbb{R}^4$
which coincide with the identity on the span of $(1,0,1,0)$ and
$(0,1,0,1)$. Of course, the analogous statements to \eqref{e:tensor}
and \eqref{e:isom} involving two-fold tensor products hold. Hence
the nondecreasingness of $Q_{4,4}$ follows from Lemma \ref{l:HZ}(2)
and Lemma \ref{l:CS}.

Finally, for the $(1,8,4)$ case we observe that
\begin{equation*}
  \|e^{is \Delta}(e^{t \Delta} |f|^2)^{1/2}
  \|_{L^8_sL^4_x(\mathbb{R} \times \mathbb{R})}^2 = \|e^{is \Delta}(e^{t \Delta} (|f|^2 \otimes
  |f|^2))^{1/2} \|_{L^4_sL^4_x(\mathbb{R} \times\mathbb{R}^2)}
\end{equation*}
because both solution operators $e^{is\Delta}$ and $e^{t\Delta}$
commute with tensor products. Therefore, the claimed monotonicity in
the $(1,8,4)$ case follows from the corresponding claim in the
$(2,4,4)$ case. This completes the proof of Theorem \ref{t:main}.
\end{proof}

It is transparent from the proof of Theorem \ref{t:main} and
\eqref{e:csform} how one may obtain an explicit formula for
$Q_{p,q}'(t)$ provided $q$ is an even integer which divides $p$ and
$f$ is sufficiently well-behaved (such as bounded with compact
support). For example, using the notation used in the above proof of
Theorem \ref{t:main},
\begin{equation*}
  \frac{d}{dt}(Q_{6,6}(t)^6) = \frac{1}{8\sqrt{3}} \int_O \int_{\mathbb{R}^3} |V(t,X) - \rho^tV(t,\rho X)|^2 (e^{t
  \Delta}|F|^2)^{1/2}(e^{t
  \Delta}|F_\rho|^2)^{1/2}\,dXd\mathcal{H}(\rho)
\end{equation*}
where $V(t,\cdot)$ denotes the time dependent vector field on
$\mathbb{R}^3$ given by
\begin{equation*}
V(t,X) = \nabla(\log e^{t \Delta}|F|^2)(X)
\end{equation*}
and $\rho^t$ denotes the transpose of $\rho$.

\section{Further results}

\subsection{Mehler-flow}
The operator $L := \Delta - \langle x,\nabla \rangle$ generates the
Mehler semigroup $e^{tL}$ (sometimes called the Ornstein--Uhlenbeck
semigroup) given by
\begin{equation*} \label{e:mehlerdefn}
  e^{tL}f(x) = \int_{\mathbb{R}^d} f(e^{-t}x+\sqrt{1-e^{-2t}}y)\,d\gamma_d(y)
\end{equation*}
for suitable functions $f$ on $\mathbb{R}^d$, where $d\gamma_d$ is
the gaussian probability measure on $\mathbb{R}^d$ given by
\begin{equation*}
d\gamma_d(y) = \frac{1}{(2\pi)^{d/2}} e^{-|y|^2/2}dy.
\end{equation*}
Naturally, $u(t,\cdot) := e^{tL}f$ satisfies the evolution equation
\begin{equation*} \label{e:mehlereqn}
  \partial_t u = Lu
\end{equation*}
with initial datum $u(0,x) = f(x)$. It will be convenient to
restrict our attention to functions $f$ which are bounded and
compactly supported.

The purpose of this remark is to highlight that when $(d,p,q)$ is
one of $(1,6,6)$, $(1,8,4)$ or $(2,4,4)$ the Strichartz norm also
exhibits a certain monotonicity subject to the input evolving
according to a quadratic Mehler-flow.

\begin{theorem}\label{t:OU} Suppose $f$ is a bounded and
compactly supported function on $\mathbb{R}^d$. If $(d,p,q)$ is
Schr\"odinger admissible and $q$ is an even integer which divides
$p$ then the quantity
\begin{equation*}
  Q(t) := \|e^{is\Delta}(e^{-\frac{1}{2}|\cdot|^2}e^{tL}|f|^2)^{1/2}
  \|_{L^p_sL^q_x(\mathbb{R} \times \mathbb{R}^d)}
\end{equation*}
is nondecreasing for all $t>0$.
\end{theorem}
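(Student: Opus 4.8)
The plan is to mimic the proof of Theorem \ref{t:main} verbatim, replacing the heat-flow monotonicity of Lemma \ref{l:CS} with the corresponding monotonicity for the Mehler semigroup. So the first thing I would establish is a Mehler analogue of Lemma \ref{l:CS}: for nonnegative bounded compactly supported $g_1, g_2$ on $\mathbb{R}^n$, the quantity
\begin{equation*}
  \Lambda_L(t) := \int_{\mathbb{R}^n} e^{-|x|^2/2}(e^{tL}g_1)^{1/2}(e^{tL}g_2)^{1/2}\,dx = \int_{\mathbb{R}^n} (e^{tL}g_1)^{1/2}(e^{tL}g_2)^{1/2}\,d\gamma_n
\end{equation*}
is nondecreasing. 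The cleanest route is Ball's tensorization/Cauchy--Schwarz trick, exactly as in the proof of Lemma \ref{l:CS}: using that $e^{tL}$ is a Markov semigroup and that for $0<t_1<t_2$ one has $e^{(t_2-t_1)L}$ acting on a product, an application of Cauchy--Schwarz in $L^2(d\gamma_n)$ together with the fact that $e^{sL}\mathbf{1}=\mathbf{1}$ gives $\Lambda_L(t_1)\le \Lambda_L(t_2)$. Concretely, $\Lambda_L(t_1)=\int (e^{t_1 L}g_1)^{1/2}(e^{t_1 L}g_2)^{1/2}\,d\gamma_n = \int e^{(t_2-t_1)L}\big((e^{t_1 L}g_1)^{1/2}(e^{t_1 L}g_2)^{1/2}\big)\,d\gamma_n$ by invariance of $\gamma_n$ under $e^{sL}$, and then writing $e^{(t_2-t_1)L}h(x)=\int h(e^{-(t_2-t_1)}x+\sqrt{1-e^{-2(t_2-t_1)}}y)\,d\gamma_n(y)$, the integrand factors as a product of two square roots to which Cauchy--Schwarz in the $(x,y)$ integral applies, producing $\big(e^{(t_2-t_1)L}e^{t_1 L}g_1\big)^{1/2}\big(e^{(t_2-t_1)L}e^{t_1 L}g_2\big)^{1/2}$ and hence $\Lambda_L(t_2)$ by the semigroup property. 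Alternatively one can quote the Bakry--\'Emery curvature-dimension condition for the Ornstein--Uhlenbeck generator and the general heat-flow monotonicity machinery of \cite{CLL}, \cite{BCCT}, which applies to any diffusion semigroup with $\mathrm{CD}(0,\infty)$ (or better); the Mehler semigroup satisfies $\mathrm{CD}(1,\infty)$.

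Granting this lemma, the proof proceeds exactly as before. Fix the case $(d,p,q)=(1,6,6)$, write $F:=f\otimes f\otimes f$, and observe that the key algebraic facts used in the heat-flow case persist: the Mehler semigroup commutes with tensor products, i.e. $e^{tL}(g_1\otimes g_2\otimes g_3)=(e^{tL}g_1)\otimes(e^{tL}g_2)\otimes(e^{tL}g_3)$ with the generator acting in the appropriate number of variables, and — crucially — the weight $e^{-|x|^2/2}$ on $\mathbb{R}^3$ also tensorizes, $e^{-|X|^2/2}=e^{-x_1^2/2}e^{-x_2^2/2}e^{-x_3^2/2}$, so that
\begin{equation*}
  e^{-|\cdot|^2/2}e^{tL}|F|^2 = \big(e^{-|\cdot|^2/2}e^{tL}|f|^2\big)\otimes\big(e^{-|\cdot|^2/2}e^{tL}|f|^2\big)\otimes\big(e^{-|\cdot|^2/2}e^{tL}|f|^2\big)
\end{equation*}
as functions on $\mathbb{R}^3$. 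Moreover, for an isometry $\rho$ of $\mathbb{R}^3$ fixing $(1,1,1)$ one has $|\rho X|=|X|$, so the Gaussian weight is $\rho$-invariant; this is exactly what makes the Mehler case work, since $L$ itself is \emph{not} rotation-invariant but the combination $e^{-|x|^2/2}e^{tL}$ interacts correctly with the averaging. Thus $\big(e^{-|\cdot|^2/2}e^{tL}|F|^2\big)(\rho\,\cdot)=e^{-|\cdot|^2/2}e^{tL}|F_\rho|^2$ with $F_\rho:=F(\rho\,\cdot)$, and applying Lemma \ref{l:HZ}(1) gives
\begin{equation*}
  Q(t)^6 = \frac{1}{2\sqrt{3}}\int_O\int_{\mathbb{R}^3} \big(e^{-|\cdot|^2/2}e^{tL}|F|^2\big)^{1/2}(X)\big(e^{-|\cdot|^2/2}e^{tL}|F_\rho|^2\big)^{1/2}(X)\,dX\,d\mathcal{H}(\rho),
\end{equation*}
which is nondecreasing in $t$ by the Mehler analogue of Lemma \ref{l:CS} and the nonnegativity of $d\mathcal{H}$. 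The $(2,4,4)$ case is identical with Lemma \ref{l:HZ}(2) and four variables, and the $(1,8,4)$ case reduces to $(2,4,4)$ by the same tensor-product identity for $e^{is\Delta}$ and $e^{tL}$ used at the end of the proof of Theorem \ref{t:main}, noting that the weight again tensorizes: $e^{-|\cdot|^2/2}e^{tL}(|f|^2\otimes|f|^2)=\big(e^{-|\cdot|^2/2}e^{tL}|f|^2\big)\otimes\big(e^{-|\cdot|^2/2}e^{tL}|f|^2\big)$.

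The main obstacle is the first step — establishing heat-flow-style monotonicity for $\Lambda_L$ — and in particular making sure the Gaussian weight $e^{-|x|^2/2}$ is placed correctly. The subtlety is that the natural object is the pairing in $L^2(d\gamma_n)$, not in $L^2(dx)$, and one must verify that Ball's argument still closes: this hinges on $\gamma_n$ being the invariant (reversible) measure for $e^{tL}$ and on $e^{tL}\mathbf{1}=\mathbf{1}$, both of which hold. A secondary technical point, needed to justify the differentiated formula analogous to \eqref{e:csform} and to handle convergence of integrals, is the restriction to bounded compactly supported $f$, which guarantees $e^{tL}|f|^2$ is smooth, strictly positive, and has controlled Gaussian decay so that $\nabla\log e^{tL}|f|^2$ is well-behaved; this is exactly why the hypothesis on $f$ is stated as it is, and it is the reason one cannot immediately pass to general $f\in L^2$ as in Theorem \ref{t:main}.
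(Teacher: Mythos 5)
Your proposal is correct, and its overall architecture is exactly the paper's: a Mehler analogue of Lemma \ref{l:CS} for the weighted quantity $\int (e^{-\frac12|\cdot|^2}e^{tL}f_1)^{1/2}(e^{-\frac12|\cdot|^2}e^{tL}f_2)^{1/2}$, combined with the Hundertmark--Zharnitsky representation of Lemma \ref{l:HZ} and the two algebraic compatibility identities (tensorisation and behaviour under isometries fixing $(1,1,1)$, resp. $(1,0,1,0),(0,1,0,1)$), with the $(1,8,4)$ case reduced to $(2,4,4)$ as before. Where you genuinely diverge from the paper is in the proof of the key lemma: the paper (Lemma \ref{l:OUmon}) differentiates in $t$, using $\partial_t\mathfrak{u}_j=\Delta\mathfrak{u}_j+\langle x,\nabla\mathfrak{u}_j\rangle+n\mathfrak{u}_j$ and the divergence theorem to exhibit $\Lambda'(t)$ as a manifestly nonnegative integral (this is where the bounded, compactly supported hypothesis is used, to control $\nabla\log\mathfrak{u}_j$ and kill the boundary terms), whereas you run Ball's semigroup/Cauchy--Schwarz argument, using invariance of $\gamma_n$ under $e^{tL}$, the pointwise bound $e^{sL}(h_1^{1/2}h_2^{1/2})\le(e^{sL}h_1)^{1/2}(e^{sL}h_2)^{1/2}$ coming from the Markov kernel, and the semigroup property. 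Your route is softer and in fact needs less of $f$ (integrability against $\gamma_n$ suffices for the lemma), but it does not produce the explicit formula for $\Lambda'(t)$ that the paper's divergence-theorem argument yields; both are perfectly adequate for the monotonicity statement. One small inaccuracy in your commentary: $L=\Delta-\langle x,\nabla\rangle$ \emph{is} invariant under orthogonal transformations (both terms commute with them), so the remark that the argument works ``since $L$ itself is not rotation-invariant'' is off; what matters, and what you in fact verify, is precisely that $e^{tL}$ commutes with the relevant isometries (which fix the origin, hence are orthogonal) and that the Gaussian weight is invariant under them, giving the analogues of \eqref{e:mehlertensor} and \eqref{e:mehlerisom}.
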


As a consequence of Theorem \ref{t:OU}, we may again recover sharp
forms of the Strichartz estimates in \eqref{e:Strichartz} for such
exponents by considering the limiting values of $Q(t)$ as $t$
approaches zero and infinity. In particular, since
\begin{equation*}
  e^{tL}|f|^2(x) = \int_{\mathbb{R}^d} |f|^2(e^{-t}x+\sqrt{1-e^{-2t}}y)\,d\gamma_d(y)
\end{equation*}
it follows that, for each $x \in \mathbb{R}^d$, $e^{tL}|f|^2(x)$
tends to $\int_{\mathbb{R}^d} |f|^2\,d\gamma_d$ as $t$ tends to
infinity. Thus, the monotonicity of $Q$ implies that
\begin{equation*}
    \| e^{is \Delta}(e^{-\frac{1}{4}|\cdot|^2}|f|)\|_{L^p_sL^q_x(\mathbb{R} \times \mathbb{R}^d)}
   \leq
   \| e^{is \Delta}(e^{-\frac{1}{4}|\cdot|^2})\|_{L^p_sL^q_x(\mathbb{R} \times \mathbb{R}^d)}
   \bigg( \int_{\mathbb{R}^d} |f|^2\,d\gamma_d \bigg)^{1/2}
\end{equation*}
for each bounded and compactly supported function $f$ on
$\mathbb{R}^d$. Thus,
\begin{equation*}
  \| e^{is \Delta}g\|_{L^p_sL^q_x(\mathbb{R} \times \mathbb{R}^d)} \leq
  \|e^{is \Delta}\big(\tfrac{1}{(2\pi)^{d/2}}e^{-\frac{1}{2}|\cdot|^2}\big)^{1/2}\|_{L^p_sL^q_x(\mathbb{R} \times \mathbb{R}^d)}
\|g\|_{L^2(\mathbb{R}^d)}
\end{equation*}
for each $g \in L^2(\mathbb{R}^d)$.

The first key ingredient in the proof of Theorem \ref{t:OU} is to
observe that an analogue of Lemma \ref{l:CS} holds for Mehler-flow.

\begin{lemma} \label{l:OUmon} Let $n \in \mathbb{N}$ and let $f_1$ and $f_2$ be
nonnegative, bounded and compactly supported functions on
$\mathbb{R}^n$. Then the quantity
\begin{equation*}
    \Lambda(t) := \int_{\mathbb{R}^n} (e^{-\frac{1}{2}|\cdot|^2}e^{tL}f_1)^{1/2}(e^{-\frac{1}{2}|\cdot|^2}e^{tL}f_2)^{1/2}
\end{equation*}
is nondecreasing for all $t>0$.
\end{lemma}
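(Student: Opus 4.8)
The plan is to reduce the Mehler-flow monotonicity to the heat-flow monotonicity of Lemma \ref{l:CS} by the standard change of variables that intertwines the Ornstein--Uhlenbeck and heat semigroups. Concretely, recall the classical identity expressing the Mehler semigroup through the heat semigroup after a gaussian weight and a rescaling of the spatial variable: if one writes $e^{tL}f(x) = e^{\tau(t)\Delta}(M_{t}f)(e^{-t}x)$ for an appropriate weight $M_t$ and an appropriate reparametrised time $\tau(t)$ (with $\tau$ increasing, $\tau(0)=0$, $\tau(\infty)=\tfrac12$), then the weighted quantity $e^{-\frac{1}{2}|\cdot|^2}e^{tL}f_i$ transforms, up to a change of variables in the integral defining $\Lambda$, into exactly the integrand $(e^{\tau\Delta}g_{1})^{1/2}(e^{\tau\Delta}g_{2})^{1/2}$ of Lemma \ref{l:CS} for suitable nonnegative integrable $g_i$ depending on $f_i$. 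The key point is that the gaussian factor $e^{-\frac12|\cdot|^2}$ is precisely what is needed to absorb the Jacobian and the Mehler weight, so that $\Lambda(t)$ equals (a constant times) the heat-flow functional $\tilde\Lambda(\tau(t))$ from Lemma \ref{l:CS}; since $\tau$ is increasing in $t$ and $\tilde\Lambda$ is nondecreasing in $\tau$, we conclude $\Lambda$ is nondecreasing in $t$.

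First I would write out the explicit subordination formula. Using the integral representation $e^{tL}f(x) = \int_{\mathbb{R}^n} f(e^{-t}x + \sqrt{1-e^{-2t}}\,y)\,d\gamma_n(y)$ given in the excerpt, a direct computation with the gaussian kernel shows
\begin{equation*}
e^{-\frac{1}{2}|x|^2} e^{tL}f(x) = c_n (1-e^{-2t})^{-n/2}\int_{\mathbb{R}^n} e^{-\frac{|x-e^{-t}z|^2}{2(1-e^{-2t})}} e^{-\frac12 |z|^2} f(z)\,dz,
\end{equation*}
and after setting $\tau = \tfrac14 (e^{2t}-1)$, substituting $x = e^t w$, and pulling out the pointwise gaussian factors, one recognises the right-hand side as $e^{-t n/2}$ times $(H_\tau * g)(w)$ with $g(z) := e^{-\frac12|z|^2} f(z)$ (possibly with harmless constants I would track carefully). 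Second, I would plug this into the definition of $\Lambda(t)$, change variables $x = e^t w$ in the integral over $\mathbb{R}^n$, and observe that all the $t$-dependent prefactors multiply to a constant (this is exactly where the weight $e^{-\frac12|\cdot|^2}$ and the Jacobian $e^{tn}$ conspire), leaving
\begin{equation*}
\Lambda(t) = C_n \int_{\mathbb{R}^n} (H_{\tau}*g_1)^{1/2}(H_{\tau}*g_2)^{1/2} = C_n\, \tilde\Lambda(\tau),
\end{equation*}
where $g_i = e^{-\frac12|\cdot|^2}f_i$ are nonnegative and integrable (since $f_i$ is bounded with compact support, $g_i$ is certainly integrable). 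Third, since $\tau(t) = \tfrac14(e^{2t}-1)$ is strictly increasing, Lemma \ref{l:CS} applied to $g_1,g_2$ gives that $\Lambda$ is nondecreasing on $(0,\infty)$.

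The main obstacle is bookkeeping rather than conceptual: getting the subordination formula with all constants and the time-reparametrisation $\tau(t)$ correct, and verifying that the $t$-dependent factors really do cancel after the change of variables, which is the one place where the specific weight $e^{-\frac12|\cdot|^2}$ (as opposed to any other gaussian) is forced. An alternative route, which I would mention as a remark, is to bypass the change of variables entirely and instead mimic the Ball--\cite{BCCT} semigroup-plus-Cauchy--Schwarz argument of Lemma \ref{l:CS} directly for the Mehler kernel: the Mehler semigroup also satisfies a semigroup property $e^{(s+t)L}=e^{sL}e^{tL}$, and one can factor the Mehler kernel $M_{t_2}(x,z)$ through an intermediate variable $y$ in a way that splits as $M_{t_2}(x,z) = \int M_{t_2-t_1}(x,y) M_{t_1}(y,z)\,dy$, so that $(e^{-\frac12|\cdot|^2}e^{t_1 L}f_i)^{1/2}$ can be written as a weighted convolution and the Cauchy--Schwarz inequality in the $y$-variable closes the argument exactly as before. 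Either way the proof is short; I would present the subordination argument as the primary one since it makes the connection to Lemma \ref{l:CS} completely transparent.
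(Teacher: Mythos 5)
Your argument is correct in substance but takes a genuinely different route from the paper. The paper proves the lemma directly: it sets $\mathfrak{u}_j = e^{-\frac12|\cdot|^2}e^{tL}f_j$, checks the evolution equation $\partial_t\mathfrak{u}_j = \Delta\mathfrak{u}_j + \langle x,\nabla\mathfrak{u}_j\rangle + n\mathfrak{u}_j$, and differentiates $\Lambda$, disposing of the divergence terms via the divergence theorem to get the explicit nonnegative expression $\Lambda'(t)=\frac14\int|v_1-v_2|^2\mathfrak{u}_1^{1/2}\mathfrak{u}_2^{1/2}$. Your reduction to Lemma \ref{l:CS} by conjugating the weighted Mehler flow into heat flow is shorter and makes the special role of the weight $e^{-\frac12|\cdot|^2}$ transparent, and it does close: starting from your (correct) kernel formula one should substitute $x=e^{-t}w$, not $x=e^{t}w$ as you wrote --- with your substitution the kernel is a Gaussian in $e^{t}w-e^{-t}z$ and is not a convolution --- and then, with the paper's normalisation \eqref{e:heatkernel} of $H_t$,
\begin{equation*}
e^{-\frac12|e^{-t}w|^2}\,e^{tL}f_j(e^{-t}w) \;=\; e^{tn}\,\bigl(H_{\tau(t)}\ast g_j\bigr)(w),
\qquad g_j:=e^{-\frac12|\cdot|^2}f_j,\quad \tau(t)=\tfrac12\bigl(e^{2t}-1\bigr).
\end{equation*}
Taking square roots, multiplying, and changing variables ($dx=e^{-tn}\,dw$), the factors $e^{tn/2}\cdot e^{tn/2}\cdot e^{-tn}$ cancel exactly, so $\Lambda(t)=\tilde{\Lambda}(\tau(t))$ where $\tilde{\Lambda}$ is the functional of Lemma \ref{l:CS} applied to the nonnegative integrable functions $g_1,g_2$; since $\tau$ is increasing, the lemma follows. (Your stated $\tau=\frac14(e^{2t}-1)$ and prefactor $e^{-tn/2}$ are the harmless constants you flagged; note also that your parenthetical $\tau(\infty)=\frac12$ belongs to the unweighted intertwining $e^{tL}f(x)=(H_{(1-e^{-2t})/2}\ast f)(e^{-t}x)$, not to the weighted one you actually use, for which $\tau\to\infty$.) As for what each approach buys: yours reuses Lemma \ref{l:CS} and is essentially computation-free beyond the kernel identity, while the paper's direct differentiation yields an explicit monotonicity formula for $\Lambda'$ and adapts immediately to the geometric Brascamp--Lieb generalisation discussed right after the lemma.
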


\begin{proof} Let $\mathfrak{u}_j : (0,\infty) \times \mathbb{R}^n
\rightarrow \mathbb{R}$ be given by
\begin{equation} \label{e:germanuj}
  \mathfrak{u}_j(t,x) = e^{-\frac{1}{2}|x|^2}e^{tL}f_j(x) = e^{-\frac{1}{2}|x|^2}\int_{\mathbb{R}^n} f_j(e^{-t}x+\sqrt{1-e^{-2t}}y)\,d\gamma_n(y)
\end{equation}
for $j=1,2$. It is straightforward to check that
  \begin{equation*}
  \partial_t \mathfrak{u}_j = \Delta \mathfrak{u}_j + \langle x,\nabla \mathfrak{u}_j \rangle +
  n\mathfrak{u}_j
\end{equation*}
and furthermore
\begin{equation*}
  \partial_t(\log \mathfrak{u}_j) = \text{div}(v_j) +
  |v_j|^2 + \langle x,v_j \rangle + n,
\end{equation*}
where $v_j := \nabla(\log \mathfrak{u}_j)$. Therefore,
\begin{equation*}
  \Lambda'(t) = I + II
\end{equation*}
where
\begin{equation*}
  I := \frac{1}{2}\int_{\mathbb{R}^n} (\text{div}(v_1)
  + \text{div}(v_2) + |v_1|^2 +
  |v_2|^2)(t,x)\,\mathfrak{u}_1(t,x)^{1/2}\mathfrak{u}_2(t,x)^{1/2}\,dx
\end{equation*}
and
\begin{equation*}
  II := \int_{\mathbb{R}^n} (\langle x,\tfrac{1}{2}v_1 + \tfrac{1}{2}v_2
  \rangle + n)(t,x)\,\mathfrak{u}_1(t,x)^{1/2}\mathfrak{u}_2(t,x)^{1/2}\,dx.
\end{equation*}
Since $f_j$ is bounded with compact support it follows from the
explicit formula for $\mathfrak{u}_j$ in \eqref{e:germanuj} that
$v_j(t,x)$ grows at most polynomially in $x$ for each fixed $t
> 0$ and consequently $\int_{\mathbb{R}^n}
\text{div}(\mathfrak{u}_1^{1/2}\mathfrak{u}_2^{1/2}v_j)$ vanishes by
the divergence theorem. It follows that
\begin{equation*}
  I = \frac{1}{4} \int_{\mathbb{R}^n}
  |v_1(t,x)-v_2(t,x)|^2\mathfrak{u}_1(t,x)^{1/2}\mathfrak{u}_2(t,x)^{1/2}\,dx,
\end{equation*}
which is manifestly nonnegative. Since
\begin{equation*}
  II = \int_{\mathbb{R}^n}
  \text{div}(\mathfrak{u}_1(t,x)^{1/2}\mathfrak{u}_2(t,x)^{1/2}x)\,dx
\end{equation*}
we can again appeal to the divergence theorem to deduce that $II$
vanishes. Hence $\Lambda'(t)$ is nonnegative for each $t > 0$.
\end{proof}

The argument in the above proof of Lemma \ref{l:OUmon} is very much
in the spirit of the heat-flow monotonicity results in \cite{CLL}
and \cite{BCCT} and naturally extends to the setting of the
geometric Brascamp--Lieb inequality. In particular, for
$j=1,\ldots,m$ suppose that $p_j \geq 1$ and $B_j : \mathbb{R}^n
\rightarrow \mathbb{R}^{n_j}$ is a linear mapping such that
$B_j^*B_j$ is a projection and $\sum_{j=1}^m \tfrac{1}{p_j}B_j^*B_j
= I_{\mathbb{R}^n}$. Then the quantity
\begin{equation*}
  \int_{\mathbb{R}^n} \prod_{j=1}^m
  (e^{-\frac{1}{2}|B_jx|^2}(e^{tL}f_j)(B_jx))^{1/p_j}\,dx = (2\pi)^{d/2}\int_{\mathbb{R}^n} \prod_{j=1}^m
  (e^{tL}f_j)(B_jx)^{1/p_j}\,d\gamma_n(x)
\end{equation*}
is nondecreasing for each $t>0$ provided each $f_j$ is a
nonnegative, bounded and compactly supported function on
$\mathbb{R}^{n_j}$. This is due to Barthe and Cordero-Erausquin
\cite{BC} in the case where each $B_j$ has rank one. A modification
of the argument gives the general rank case (see \cite{CL} for
closely related results).

By following the same argument employed in our proof of Theorem
\ref{t:main}, to conclude the proof of Theorem \ref{t:OU} it
suffices to note that Mehler-flow appropriately respects tensor
products and isometries. In particular we need that if $F$ is the
$m$-fold tensor product of $f$ then
\begin{equation} \label{e:mehlertensor}
  \bigotimes_{j=1}^m e^{-\frac{1}{2}|\cdot|^2}e^{tL}|f|^2 =
  e^{-\frac{1}{2}|\cdot|^2}e^{tL}|F|^2
\end{equation}
and, for each isometry $\rho$ on $(\mathbb{R}^d)^m$,
\begin{equation} \label{e:mehlerisom}
  \bigotimes_{j=1}^m e^{-\frac{1}{2}|\cdot|^2}e^{tL}|f|^2(\rho \,\cdot) =
  e^{-\frac{1}{2}|\cdot|^2}e^{tL}|F_\rho|^2
\end{equation}
where $F_\rho := F(\rho \,\cdot)$. Here, the operators $|\cdot|$ and
$L$ are acting on the number of variables dictated by context. The
verification of \eqref{e:mehlertensor} and \eqref{e:mehlerisom} is
an easy exercise.

\subsection{Mitigating powers of $t$} It is possible to relax the
quadratic nature of the heat-flow in the quantity $Q_{p,q}$ in
Theorem \ref{t:main} by inserting a mitigating factor which is a
well-chosen power of $t$.
\begin{theorem} \label{t:power}
  Suppose that $(p,q,d)$ is Schr\"{o}dinger admissible and $q$ is an
  even integer which divides $p$. If $f$ is a nonnegative integrable function on $\mathbb{R}^d$
  and $\alpha \in [1/2,1]$ then the quantity
  \begin{equation*}
     t^{d(\alpha - 1/2)/2}\|e^{is \Delta}(e^{t\Delta} f)^{\alpha}\|_{L^p_sL^q_x(\mathbb{R} \times \mathbb{R}^d)}.
  \end{equation*}
  is nondecreasing for each $t > 0$.
\end{theorem}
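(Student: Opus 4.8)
The plan is to follow the same strategy used in the proof of Theorem \ref{t:main}: reduce the Strichartz norm, via the Hundertmark--Zharnitsky representation (Lemma \ref{l:HZ}), to an average over an isometry group of Cauchy--Schwarz-type pairings, and then invoke a heat-flow monotonicity lemma --- but now the lemma must be the appropriately modified one incorporating the mitigating factor $t^{d(\alpha-1/2)/2}$ and exponent $\alpha$ instead of $1/2$. So the first step is to formulate and prove the analogue of Lemma \ref{l:CS}: for nonnegative integrable $f_1,f_2$ on $\mathbb{R}^n$ and $\alpha\in[1/2,1]$, the quantity
\begin{equation*}
  \Lambda_\alpha(t):=t^{n(\alpha-1/2)}\int_{\mathbb{R}^n}(e^{t\Delta}f_1)^{\alpha}(e^{t\Delta}f_2)^{\alpha}
\end{equation*}
is nondecreasing for $t>0$. (Note the exponent on $t$ here is $n(\alpha-1/2)$, which after the tensor-product bookkeeping and taking a $p$-th root produces the claimed $d(\alpha-1/2)/2$ in dimension $d$ with the relevant number of tensor factors.) The cleanest proof of this is the differential one: assuming $f_1,f_2$ bounded with compact support, write $u_j=e^{t\Delta}f_j$, $v_j=\nabla\log u_j$, use $\partial_t u_j=\Delta u_j$ so that $\partial_t\log u_j=\mathrm{div}(v_j)+|v_j|^2$, and compute
\begin{equation*}
  \frac{d}{dt}\int (u_1u_2)^\alpha = \alpha\int\big(\mathrm{div}(v_1)+\mathrm{div}(v_2)+|v_1|^2+|v_2|^2\big)(u_1u_2)^\alpha.
\end{equation*}
Integrating by parts the divergence terms against $(u_1u_2)^\alpha$ (justified by the polynomial-in-$x$ growth of $v_j$ coming from the explicit Gaussian formula, exactly as in the proof of Lemma \ref{l:OUmon}) yields $\nabla(u_1u_2)^\alpha=\alpha(u_1u_2)^\alpha(v_1+v_2)$, and one arrives at
\begin{equation*}
  \frac{d}{dt}\int (u_1u_2)^\alpha = \alpha\int\big((1-2\alpha)|v_1+v_2|^2 + |v_1-v_2|^2\big)\tfrac{1}{2}\cdot(u_1u_2)^\alpha
\end{equation*}
up to a harmless constant; since $\alpha\leq 1$ the coefficient $1-2\alpha$ of the bad term is $\geq -1$, but it can be negative, so this raw derivative is not obviously signed --- that is precisely why the mitigating power of $t$ is needed.

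The key identity that rescues the sign is a scaling/virial relation for the heat evolution: differentiating the scaling symmetry of $e^{t\Delta}f$ one gets, for $w:=e^{t\Delta}f$,
\begin{equation*}
  2t\,\partial_t w = \langle x,\nabla w\rangle + \text{(lower order)},
\end{equation*}
more precisely $2t\,\partial_t(\log w) = \langle x,v\rangle$ after the right normalization, which upon integration by parts gives a lower bound of the form $\int|v_1+v_2|^2(u_1u_2)^\alpha \gtrsim \frac{1}{t}\cdot(\text{something}\geq 0)$ --- this is the mechanism by which the explicit factor $t^{n(\alpha-1/2)}$, whose logarithmic derivative is $n(\alpha-1/2)/t$, exactly compensates the possibly-negative $(1-2\alpha)|v_1+v_2|^2$ term. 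Concretely, I expect that after multiplying by the mitigating power one can rewrite
\begin{equation*}
  \frac{d}{dt}\Lambda_\alpha(t) = c_\alpha\, t^{n(\alpha-1/2)}\int_{\mathbb{R}^n}\Big|v_1-v_2\Big|^2(u_1u_2)^\alpha + (\text{a nonnegative scaling term}),
\end{equation*}
or alternatively that $\Lambda_\alpha$ can be realized as an average (over a parameter) of the $\alpha=1/2$ functionals $\Lambda$ from Lemma \ref{l:CS} applied to rescaled data, making monotonicity immediate from that lemma; I would try the averaging route first since it avoids the delicate sign analysis entirely. An alternative and possibly slicker approach is to note that $(e^{t\Delta}f)^\alpha$ for $\alpha\in[1/2,1]$ can be written, via the subordination/Bernstein-function machinery or via Jensen, as a superposition $\int (e^{(t+s)\Delta}f)^{1/2}$-type objects against a positive measure depending on $t$ with the right scaling, reducing everything to Lemma \ref{l:CS}.

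Once the scalar monotonicity lemma with the mitigating factor is in hand, the rest is verbatim the argument of Theorem \ref{t:main}: for $(d,p,q)=(1,6,6)$ set $F=f\otimes f\otimes f$ on $\mathbb{R}^3$, use $e^{t\Delta}F=\bigotimes e^{t\Delta}f$ and the isometry-covariance \eqref{e:isom}, apply Lemma \ref{l:HZ}(1) to get
\begin{equation*}
  \big(t^{d(\alpha-1/2)/2}\|e^{is\Delta}(e^{t\Delta}f)^\alpha\|_{L^6_sL^6_x}\big)^6 = \tfrac{1}{2\sqrt3}\int_O t^{3(\alpha-1/2)}\int_{\mathbb{R}^3}(e^{t\Delta}F)^\alpha(e^{t\Delta}F_\rho)^\alpha\,dX\,d\mathcal{H}(\rho),
\end{equation*}
and conclude by the lemma (with $n=3$) and positivity of Haar measure; note that here $f$ need only be nonnegative rather than $|f|^2$, which is why the statement is phrased directly in terms of $e^{t\Delta}f$. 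The $(2,4,4)$ case is identical with two-fold tensor products on $\mathbb{R}^4$ and Lemma \ref{l:HZ}(2), and $(1,8,4)$ follows from $(2,4,4)$ via the tensor-product identity for $e^{is\Delta}$ and $e^{t\Delta}$ exactly as before (the mitigating exponents match since $d=1$ with four tensor factors gives the same total as $d=2$ with two). The main obstacle, then, is entirely contained in the first paragraph: correctly identifying the exponent of $t$ that makes the scalar functional monotone and supplying a clean proof --- whether by the averaging/subordination trick reducing to Lemma \ref{l:CS}, or by the direct divergence-theorem computation combined with the heat-scaling virial identity to absorb the $(1-2\alpha)|v_1+v_2|^2$ term; I anticipate the averaging argument will be the shortest and least error-prone.
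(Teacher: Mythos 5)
Your overall architecture is exactly the paper's: reduce, via the tensor-product identities and the Hundertmark--Zharnitsky representation, to the monotonicity of the scalar functional $t^{n(\alpha-1/2)}\int_{\mathbb{R}^n}(e^{t\Delta}f_1)^{\alpha}(e^{t\Delta}f_2)^{\alpha}$ on $\mathbb{R}^n$ with $n=md$, and your exponent bookkeeping (including the $(1,8,4)\Rightarrow(2,4,4)$ step) is correct. This scalar statement is precisely \eqref{e:CSgen}, which the paper does not prove but simply quotes from \cite{BCCT}, and then the paper, like you, says the rest is verbatim the argument of Theorem \ref{t:main}. So the reduction half of your proposal matches the paper; the difference is that you attempt to supply a proof of the key lemma, and that is where there is a genuine gap.

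Neither of your two routes to the lemma works as described. The subordination/averaging idea --- writing $(e^{t\Delta}f)^{\alpha}$ for $\alpha\in(1/2,1]$ as a positive superposition of square roots of heat evolutions so as to reduce to Lemma \ref{l:CS} --- has no evident basis and I would not expect such a representation to exist. In the direct computation your sign analysis points the wrong way: after integration by parts one gets
\begin{equation*}
\frac{d}{dt}\int (u_1u_2)^{\alpha}
=\frac{\alpha}{2}\int\big(|v_1-v_2|^2-(2\alpha-1)|v_1+v_2|^2\big)(u_1u_2)^{\alpha},
\end{equation*}
so the term you must control is $+\frac{\alpha(2\alpha-1)}{2}\int|v_1+v_2|^2(u_1u_2)^{\alpha}$, and what is needed is an \emph{upper} bound for it of size $\frac{n(2\alpha-1)}{2t}\int(u_1u_2)^{\alpha}$ (to be absorbed by the logarithmic derivative of $t^{n(\alpha-1/2)}$), not the lower bound you propose; moreover your ``virial identity'' $2t\,\partial_t\log w=\langle x,\nabla\log w\rangle$ holds only for the self-similar solution, not for general data. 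The missing ingredient that actually closes the direct argument is: integrate by parts once more, $\int|v_1+v_2|^2(u_1u_2)^{\alpha}=-\frac{1}{\alpha}\int(\Delta\log u_1+\Delta\log u_2)(u_1u_2)^{\alpha}$, and use the pointwise bound $\nabla^2\log e^{t\Delta}f\ge-\frac{1}{2t}I$ (valid for any nonnegative datum, since $\log\int e^{x\cdot y/2t}\,d\mu(y)$ is convex in $x$), so that $\Delta\log u_j\ge-\frac{n}{2t}$ and hence $\frac{\alpha(2\alpha-1)}{2}\int|v_1+v_2|^2(u_1u_2)^{\alpha}\le\frac{n(2\alpha-1)}{2t}\int(u_1u_2)^{\alpha}$, which is exactly compensated by the mitigating factor, with the $|v_1-v_2|^2$ term to spare. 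With that lemma in place (or by citing \cite{BCCT} for \eqref{e:CSgen}, as the paper does), the rest of your argument goes through.
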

By \cite{BCCT}, we have that Lemma \ref{l:CS} generalises to the
statement that
\begin{equation} \label{e:CSgen}
     t^{n(\alpha-1/2)}\int_{\mathbb{R}^n} (e^{t\Delta}f_1)^{\alpha}(e^{t\Delta}f_2)^{\alpha}
\end{equation}
is nondecreasing for all $t>0$ provided $n \in \mathbb{N}$, $\alpha
\in [1/2,1]$ and $f_1, f_2$ are nonnegative integrable functions on
$\mathbb{R}^n$. Thus Theorem \ref{t:power} follows by the same
argument in our proof of Theorem \ref{t:main}.

\subsection{Higher dimensions}
Theorem \ref{t:main} raises obvious questions about higher
dimensional analogues and consequently the potential of our approach
to prove the sharp form of \eqref{e:Strichartz} in all dimensions
(at least for nonnegative initial data $f$). Recently, Shao
\cite{shao} has shown that for non-endpoint Schr\"odinger admissible
triples $(p,q,d)$,
   \begin{equation*} \sup \{ \|e^{is\Delta}f\|_{L^p_sL^q_{x}(\mathbb{R} \times \mathbb{R}^d)} : \|f\|_{L^2(\mathbb{R}^d)} = 1\}
   \end{equation*}
is at least attained, although does not determine the explicit form
of an extremiser. There is some anecdotal evidence in \cite{BBC} to
suggest that Theorem \ref{t:main} may not extend to all
Schr\"{o}dinger admissible triples $(d,p,q)$. Nevertheless, we end
this section with a discussion of some results in this direction
which we believe to be of some interest.

We shall consider the case $p=q=2+4/d$ and it will be convenient to
denote this number by $p(d)$. Since $p(d)$ is not an even integer
for $d\geq 3$, one possible approach to the question of monotonicity
of $Q_{p(d),p(d)}$, given by \eqref{e:Qpq}, is to attempt to embed
the Strichartz norm
\begin{equation*}
|||f|||_{p(d)} := \|e^{is\Delta}f \|_{L^{2+4/d}_{s,x}(\mathbb{R}
\times \mathbb{R}^d)}
\end{equation*}
in a one-parameter family of norms $|||\cdot|||_{p}$ which are
appropriately monotone under a quadratic flow for $p\in
2\mathbb{N}$, and for which the resulting monotonicity formula may
be ``extrapolated", in a sign preserving way, to $p=p(d)$. Such an
approach has proved effective in the context of the general
Brascamp--Lieb inequalities, and was central to the approach to the
multilinear Kakeya and Strichartz inequalities in \cite{BCT}.

Our analysis for $d=1,2$ suggests (albeit rather indirectly) a
natural candidate for such a family of norms. For each $d \in
\mathbb{N}$ and $p > p(d)$, we define a norm
 $|||\cdot |||_p$ on $\mathcal{S}(\mathbb{R}^d)$ by
 \begin{equation*}
   ||| f |||^p_p = \frac{(p(d)/\pi)^{d/2}}{(2\pi)^{d+2}}
   \int_{\mathbb{R}^d} \int_{\mathbb{R}^d} \int_0^\infty
   \int_\mathbb{R} \left|\int_{\mathbb{R}^d}
     e^{-|z-\sqrt{\zeta}\xi|^2}
    e^{i(x \cdot \xi - s|\xi|^2)}\widehat{f}(\xi)\,d\xi\right|^p
   \frac{\zeta^{\nu-1}}{\Gamma(\nu)}\,ds d\zeta
   dzdx,
  \end{equation*}
where $\nu = d(p-p(d))/4$. For $|||\cdot|||_p$ we have the
following.
\begin{theorem}\label{t:modified}
As $p$ tends to $p(d)$ the norm $|||f|||_p$ converges to the
Strichartz norm $\|e^{is\Delta}f\|_{L^{p(d)}_{s,x}}$ for each $f$
belonging to the Schwartz class on $\mathbb{R}^d$. Additionally, if
$\alpha \in [1/2,1]$ and $f$ is a nonnegative integrable function on
$\mathbb{R}^d$ then
\begin{equation*}
\widetilde{Q}_{\alpha,p}(t) :=
t^{d(\alpha-1/2)/2}|||(e^{t\Delta}f)^{\alpha}|||_p
\end{equation*}
is nondecreasing for all $t>0$ whenever $p$ is an even integer.
\end{theorem}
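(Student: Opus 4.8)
The plan is to handle the two assertions separately, since they are of rather different character. For the convergence statement, I would begin by interpreting the inner integral: the quantity $\int_{\mathbb{R}^d} e^{-|z-\sqrt{\zeta}\xi|^2} e^{i(x\cdot\xi - s|\xi|^2)}\widehat{f}(\xi)\,d\xi$ is, up to constants, $e^{is\Delta}$ applied to a gaussian-damped modification of $f$, where the gaussian factor $e^{-|z-\sqrt{\zeta}\xi|^2}$ localises in frequency around $z/\sqrt{\zeta}$. The key observation is that $\nu = d(p-p(d))/4 \to 0^+$ as $p \to p(d)$, so the measure $\frac{\zeta^{\nu-1}}{\Gamma(\nu)}\,d\zeta$ on $(0,\infty)$ converges weakly to the Dirac mass at $\zeta = 0$ (this is the standard fact that $\frac{\zeta^{\nu-1}}{\Gamma(\nu)}$ is an approximate identity at the origin as $\nu \to 0$). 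Setting $\zeta = 0$ collapses the gaussian factor $e^{-|z-\sqrt{\zeta}\xi|^2}$ to $e^{-|z|^2}$, which is independent of $\xi$, so the $z$-integral factors out and contributes a fixed constant $\int_{\mathbb{R}^d} e^{-p|z|^2}\,dz$; after tracking all the normalising constants — the $(p(d)/\pi)^{d/2}$, the $(2\pi)^{d+2}$, and the gaussian integral — one should recover exactly $\|e^{is\Delta}f\|_{L^{p(d)}_{s,x}}^{p(d)}$. For a Schwartz $f$ the requisite uniform integrability to justify passing the limit inside all the integrals is routine. I would present this as a careful but unsurprising computation.

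For the monotonicity statement, the strategy mirrors the proof of Theorem \ref{t:main} exactly: multiply out the $p$-th power (legitimate since $p \in 2\mathbb{N}$), recognise the resulting expression as an average of Cauchy--Schwarz-type pairings, and invoke the generalised monotonicity \eqref{e:CSgen}. Concretely, write $|||(e^{t\Delta}f)^\alpha|||_p^p$ with $g_t := (e^{t\Delta}f)^\alpha$ in place of $f$; expanding $|\cdot|^p = (\text{something})^{p/2}\overline{(\cdot)}^{p/2}$ and using that $e^{is\Delta}$ and the frequency-gaussian multiplier both tensorise, one obtains an integral over $\mathbb{R}^d\times\mathbb{R}^d\times(0,\infty)\times\mathbb{R}\times(\mathbb{R}^d)^{p/2}$ of a product of $p/2$ copies of a kernel applied to $g_t$, which can be rewritten — just as \eqref{e:tensor} and \eqref{e:isom} were used — as a pairing of the form $\int (e^{t\Delta}F)^\alpha (e^{t\Delta}F_\rho)^\alpha$ for $F$ a tensor power of $f$ and $\rho$ ranging over suitable isometries, integrated against a nonnegative measure. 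Here the crucial point is that the specific gaussian weight $e^{-|z-\sqrt{\zeta}\xi|^2}$ and the dimension bookkeeping are engineered precisely so that, after the change of variables absorbing the gaussians into the heat flow, the powers $\alpha$ and the dimension $n$ appearing in \eqref{e:CSgen} match up; the mitigating factor $t^{d(\alpha-1/2)/2}$ is exactly what \eqref{e:CSgen} demands. Since \eqref{e:CSgen} gives monotonicity of each such pairing and the outer measure is nonnegative, $\widetilde{Q}_{\alpha,p}(t)$ is nondecreasing.

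The main obstacle, I expect, is not either assertion in isolation but the algebraic verification that the family $|||\cdot|||_p$ is set up correctly: one must check that the gaussian factor really does tensorise in the right way and that, under the rescaling that converts the frequency-gaussian into a heat semigroup acting on the physical-side function, the exponent $\zeta^{\nu-1}/\Gamma(\nu)$ weight and the overall power of $t$ conspire so that \eqref{e:CSgen} applies with the correct parameters. This is where all the normalising constants earn their keep, and it is the step most prone to error; everything else is a faithful repetition of the arguments already given for Theorems \ref{t:main} and \ref{t:power}. I would therefore organise the write-up around a single clean lemma isolating the tensorisation-plus-rescaling identity, and then deduce both halves of Theorem \ref{t:modified} from it together with \eqref{e:CSgen} and the approximate-identity fact about $\zeta^{\nu-1}/\Gamma(\nu)$.
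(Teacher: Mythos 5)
Your overall route is the same as the paper's. The first half is handled exactly as you describe: $\zeta^{\nu-1}/\Gamma(\nu)\,d\zeta$ acts as an approximate identity at $\zeta=0$ (applied to $\phi(\nu,\zeta)$ given by the $s,x,z$-integral, with the mild regularity checked via standard Strichartz-type estimates for Schwartz $f$), the $z$-integral at $\zeta=0$ contributes $(\pi/p(d))^{d/2}$, and since $p(d)d/2=d+2$ the normalising constants collapse to give precisely $\|e^{is\Delta}f\|_{L^{p(d)}_{s,x}}^{p(d)}$. The second half is likewise proved by multiplying out the even power, exhibiting the result as a pairing averaged over isometries, and invoking \eqref{e:CSgen}.

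However, the mechanism you sketch for the pivotal step is not right, and it is exactly the step you defer to a lemma. The gaussian factor $e^{-|z-\sqrt{\zeta}\xi|^2}$ does \emph{not} tensorise over the $m=p/2$ copies: the variables $z$ and $\zeta$ are shared by all of them, so after expanding $|\cdot|^{2m}$ the copies are coupled; nor is there any rescaling converting this frequency gaussian into a heat semigroup acting on the physical-side datum --- the only heat flow in the argument is the one applied to $f$. What actually happens is this: writing $X=(\xi_1,\ldots,\xi_m)$, $Y=(\eta_1,\ldots,\eta_m)\in\mathbb{R}^{md}$, the $x$- and $s$-integrations produce the constraints $\delta(X_W-Y_W)\,\delta(|X|^2-|Y|^2)$, where $W$ is the span of the $d$ vectors $(e_j,\ldots,e_j)/\sqrt{m}$, and the $z$- and $\zeta$-integrations of the coupled gaussians are then carried out explicitly, yielding a kernel $K(X,Y)$ which \emph{on the support of these deltas} equals a constant multiple of $|X_{W^\perp}|^{-2\nu}$. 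The role of the choice $\nu=d(2m-p(d))/4$ is that $2\nu=md-d-2$, so by polar coordinates in $W^\perp$ this weight together with the two deltas is exactly the kernel of the Haar average over the group $O$ of isometries of $\mathbb{R}^{md}$ fixing $W$; that is, $|||f|||_{2m}^{2m}=c_{d,m}\int F\,PF$ with $F=f^{\otimes m}$ and $P$ the projection onto $O$-invariant functions, the precise analogue of Lemma \ref{l:HZ}. Once this representation is in hand, your conclusion does go through verbatim: $\otimes_{j=1}^m(e^{t\Delta}f)^\alpha=(e^{t\Delta}F)^\alpha$, $(e^{t\Delta}F)^\alpha(\rho\,\cdot)=(e^{t\Delta}F_\rho)^\alpha$, and \eqref{e:CSgen} with $n=md$ supplies exactly the factor $t^{md(\alpha-1/2)}=\bigl(t^{d(\alpha-1/2)/2}\bigr)^{2m}$, so $\widetilde{Q}_{\alpha,2m}$ is nondecreasing. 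In short, the plan is sound and matches the paper, but the isolated lemma must be the $z,\zeta$-integration and its identification with the projection onto $O$-invariant functions, not a tensorisation of the gaussian weight.
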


 \begin{rems} \emph{(1) This ``modified Strichartz norm" $|||f|||_p$  is related in spirit
to the norm
\begin{equation*}
\|I_\beta \;e^{is\Delta}f\|_{L^{p}_{s,x}(\mathbb{R} \times
\mathbb{R}^d)},
\end{equation*}
where $I_\beta$ denotes the fractional integral of order
$\beta=d(p-p(d))/2p$. Although it is true that for all $p\geq p(d)$,
\begin{equation*}
\|I_\beta \;e^{is\Delta}f\|_{L^{p}_{s,x}(\mathbb{R} \times
\mathbb{R}^d)} \leq C\|f\|_{L^2(\mathbb{R}^d)}
\end{equation*}
for some finite constant $C$, the desired heat-flow monotonicity for
$p\in 2\mathbb{N}$ is far from apparent for these norms.}

\emph{(2) Both the Strichartz norm and the modified Strichartz norms
$|||\cdot|||_p$ are invariant under the Fourier transform; that is
\begin{equation}\label{invstr}
\|e^{is\Delta}\widehat{f}\|_{L^{p(d)}_{s,x}(\mathbb{R} \times
\mathbb{R}^d)} = \|e^{is\Delta}f\|_{L^{p(d)}_{s,x}(\mathbb{R} \times
\mathbb{R}^d)}
\end{equation}
for all $d\in\mathbb{N}$ and
\begin{equation}\label{e:invmod}
|||\widehat{f}|||_p=|||f|||_p
\end{equation}
for all $p>p(d)$ and $d\in\mathbb{N}$. This observation follows by
direct computation and simple changes of variables; for the
Strichartz norm it was noted for $d=1,2$ in \cite{HZ}. We note that
in the proof of Theorem \ref{t:modified} below we use the invariance
in \eqref{e:invmod} for even integers $p$ which (as we will see)
follows from Parseval's theorem.}

\emph{(3)  For every integer $m \geq 2$ and in all dimensions $d
\geq 1$, a corollary to the case $\alpha=1/2$ of Theorem
\ref{t:modified} is the following sharp inequality,
  \begin{equation*}
    |||f|||_{2m} \leq C_{d,m}\|f\|_{L^2(\mathbb{R}^d)},
  \end{equation*}
  where the constant $C_{d,m}$ is given by
  \begin{equation} \label{e:sharpmod}
    C_{d,m}^{2m} = \frac{\pi^\nu}{2^{\nu+1}m^d\Gamma(\nu +1)}\left(\frac{p(d)}{2} \right)^{d/2}.
  \end{equation}
  Here $\nu = d(2m-p(d))/4$ as before.}

\emph{(4)  It is known that for nonnegative integrable functions $f$
on $\mathbb{R}^d$ the quantity
  \begin{equation*}
  \|\widehat{(e^{t\Delta}f)^{1/p}}\|_{L^{p'}(\mathbb{R}^d)}
  \end{equation*}
  is nondecreasing for each $t > 0$ provided the conjugate exponent
   $p'$ is an even integer; this follows from \cite{BCCT} and \cite{BB}. However,
   tying in with our earlier comment on the extension of Theorem
   \ref{t:main} to all admissible Schr\"odinger admissible
   exponents, in \cite{BBC} we show that whenever $p' > 2$ is not an even integer
   there exists a nonnegative integrable function $f$ such that $Q(t)$
   is \emph{strictly decreasing} for all sufficiently small $t>0$.}
\end{rems}

\begin{proof}[Proof of Theorem \ref{t:modified}]
To see the claimed limiting behaviour of $|||f|||_p$ as $p$ tends to
$p(d)$ observe that
\begin{equation} \label{e:limit}
  \lim_{\nu \rightarrow 0} \frac{1}{\Gamma(\nu)} \int_0^\infty \phi(\nu,\zeta)
  \zeta^{\nu-1}\, d\zeta = \phi(0,0)
\end{equation}
for any $\phi$ on $[0,\infty) \times [0,\infty)$ satisfying certain
mild regularity conditions. For example, \eqref{e:limit} holds if
$\phi$ is continuous at the origin and there exist constants
$C,\varepsilon > 0$ such that, locally uniformly in $\nu$, one has
$|\phi(\nu,\zeta) - \phi(\nu,0)| \leq C|\zeta|^\varepsilon$ for all
$\zeta$ in a neighbourhood of zero and $|\phi(\nu,\zeta)| \leq
C|\zeta|^{-\varepsilon}$ for all $\zeta$ bounded away from a
neighbourhood of zero. One can check that standard estimates (for
example, Strichartz estimates of the form \eqref{e:Strichartz} for
compactly supported functions) imply that for $f$ belonging to the
Schwartz class on $\mathbb{R}^d$,
\begin{equation*}
  \phi(\nu,\zeta) = \int_{\mathbb{R}^d} \int_{\mathbb{R}^d} \int_\mathbb{R} \left|\int_{\mathbb{R}^d}
  e^{-|z-\sqrt{\zeta}\xi|^2} e^{i(x \cdot \xi - s|\xi|^2)}
  \widehat{f}(\xi)\,d\xi\right|^p \,dsdxdz
\end{equation*}
satisfies such conditions.

We now turn to the monotonicity claim, beginning with some notation.
Suppose that $p=2m$ for some positive integer $m$. For a nonnegative
$f\in\mathcal{S}(\mathbb{R}^d)$ let
$F:\mathbb{R}^{md}\rightarrow\mathbb{R}$ be given by $F(X) =
\otimes_{j=1}^m f(X)$ where $X = (\xi_1,\ldots,\xi_m) \in
(\mathbb{R}^d)^m \cong \mathbb{R}^{md}$. Next we define the subspace
$W$ of $\mathbb{R}^{md}$ to be the linear span of
$\one_1,\ldots,\one_m$ where for each $1 \leq j \leq d$, $\one_j :=
(e_j,\ldots,e_j)/\sqrt{m}$ and $e_j$ denotes the $j$th standard
basis vector of $\mathbb{R}^d$. For a vector $X\in\mathbb{R}^{md}$
we denote by $X_W$ and $X_{W^\perp}$ the orthogonal projections of
$X$ onto $W$ and $W^\perp$ respectively. Now,
\begin{equation*}
|||f|||_{2m}^{2m} = \tfrac{1}{2^{d+1}\pi} \big(\tfrac{p(d)}{m\pi}
\big)^{d/2} \int
\delta(X_W-Y_W)\delta(|X|^2-|Y|^2)K(X,Y)F(X)F(Y)\,dXdY,
\end{equation*}
where we integrate over $\mathbb{R}^{md} \times \mathbb{R}^{md}$ and
\begin{eqnarray*}
\begin{aligned}
K(X,Y) &=
\int_0^\infty\frac{\zeta^{\nu-1}}{\Gamma(\nu)}e^{-\zeta(|X|^2+|Y|^2)}\int_{\mathbb{R}^d}
e^{\sqrt{m\zeta}z\cdot(X_W+Y_W)}e^{-\frac{m}{2}|z|^2}\,dzd\zeta\\
&= \big( \tfrac{2\pi}{m}
\big)^{d/2}\int_0^\infty\frac{\zeta^{\nu-1}}{\Gamma(\nu)}e^{-\zeta(|X|^2+|Y|^2)}e^{\frac{1}{2}\zeta|X_W+Y_W|^2}\,d\zeta
\end{aligned}
\end{eqnarray*}
for $(X,Y) \in \mathbb{R}^{md} \times \mathbb{R}^{md}$. Thus, on the
support of the delta distributions ($X_W=Y_W$ and $|X|^2=|Y|^2$) we
have
\begin{eqnarray*} \label{lapl}
\begin{aligned}
K(X,Y) &= \big( \tfrac{2\pi}{m} \big)^{d/2}
\int_0^\infty\frac{\zeta^{\nu-1}}{\Gamma(\nu)}e^{-2\zeta(|X|^2-|X_W|^2)}\,d\zeta
\\ &= \tfrac{1}{2^\nu}\big( \tfrac{2\pi}{m}
\big)^{d/2} \frac{1}{(|X|^2-|X_W|^2)^\nu} = \tfrac{1}{2^\nu}\big(
\tfrac{2\pi}{m} \big)^{d/2}\frac{1}{|X_{W^{\perp}}|^{2\nu}}.
\end{aligned}
\end{eqnarray*}
Therefore
\begin{equation} \label{e:modrep}
|||f|||_{2m}^{2m} = \tfrac{\pi^\nu}{2^{\nu+1}m^d\Gamma(\nu
+1)}(\tfrac{p(d)}{2})^{d/2} \int_{\mathbb{R}^{md}}F(X)PF(X)\,dX,
\end{equation}
where $P$ is given by
\begin{equation*}
PF(X)= \tfrac{\Gamma(\nu+1)}{\pi^{\nu+1}}
\frac{1}{|X_{W^\perp}|^{2\nu}}\int_{\mathbb{R}^{md}}\delta(X_W-Y_W)\delta(|X|^2-|Y|^2)F(Y)\,dY.
\end{equation*}
Using polar coordinates in $W^\perp$ in the above integral and
recalling that $\nu = d(2m-p(d))/4$ identifies $P$ as the orthogonal
projection onto functions on $\mathbb{R}^{md}$ which are invariant
under the action of $O$, the group of isometries on
$\mathbb{R}^{md}$ which coincide with the identity on $W$; i.e.
\begin{equation*}
PF(X)=\int_{O}F(\rho X)\,d\mathcal{H}(\rho),
\end{equation*}
where $d\mathcal{H}$ denotes the right-invariant Haar probability
measure on $O$.

Finally, applying the representation of $|||f|||_{2m}^{2m}$ in
\eqref{e:modrep} to the quantity $\widetilde{Q}_{\alpha,2m}$, and
appealing to the nondecreasingness of the quantity in
\eqref{e:CSgen},  we conclude that $\widetilde{Q}_{\alpha,2m}(t)$ is
nondecreasing for all $t > 0$ and all $\alpha \in [1/2,1]$. This
completes the proof of Theorem \ref{t:modified}.
\end{proof}

%    Bibliographies can be prepared with BibTeX using amsplain,
%    amsalpha, or (for "historical" overviews) natbib style.
\bibliographystyle{amsalpha}
%    Insert the bibliography data here.

\end{document}